\title{Pluricomplex Green's functions and Fano manifolds}
\author{Nicholas McCleerey and Valentino Tosatti}
\institution{Department of Mathematics, Northwestern University, 2033 Sheridan Road, Evanston, IL 60208, USA}\\
\email{njm2@math.northwestern.edu}} \\
\institution{Department of Mathematics, Northwestern University, 2033 Sheridan Road, Evanston, IL 60208, USA}\\
\email{tosatti@math.northwestern.edu}}
\date{\vspace{-5ex}} 
\journal{\'Epijournal de G\'eom\'etrie Alg\'ebrique} 
\newdimen\origiwspc
\font
\numberwithin{equation}{section}
\renewcommand{\p@equation}{\arabic{section}.\arabic{equation}\expandafter\@gobble}
\newcommand{\ddbar}{\sqrt{-1} \partial \overline{\partial}}
\newcommand{\Ric}{\mathrm{Ric}}
\newcommand{\tr}[2]{\textrm{tr}_{#1}{#2}}
\newcommand{\ti}[1]{\tilde{#1}}
\newcommand{\vp}{\varphi}
\newcommand{\ve}{\varepsilon}
\renewcommand{\leq}{\leqslant}
\renewcommand{\geq}{\geqslant}
\newcommand{\be}{\begin{equation}}
\newcommand{\ee}{\end{equation}}
\newtheorem{theorem}{Theorem}[section]
\newtheorem{lemma}[theorem]{Lemma}
\newtheorem{proposition}[theorem]{Proposition}
\newtheorem{tremark}[theorem]{Remark}
\newenvironment{remark}{\begin{tremark} \rm}{\end{tremark}}
\begin{document}


\maketitle



\begin{prelims}


\def\abstractname{Abstract}
\abstract{We show that if a Fano manifold does not admit K\"ahler-Einstein metrics then the K\"ahler potentials along the continuity method subconverge to a function with analytic singularities along a subvariety which solves the homogeneous complex Monge-Amp\`ere equation on its complement, confirming an expectation of Tian-Yau.}

\keywords{Fano manifold; pluricomplex Green function; algebraic K\"ahler potentials}

\MSCclass{32W20 (primary); 32U35, 14J45, 32Q20, 53C25 (secondary)}

\vspace{0.15cm}

\languagesection{Fran\c{c}ais}{%

\textbf{Titre. Fonctions de Green pluricomplexes et vari\'et\'es de Fano} \commentskip \textbf{R\'esum\'e.} Nous montrons que si une vari\'et\'e de Fano n'admet aucune m\'etrique de K\"ahler-Einstein alors, suivant la m\'ethode de continuit\'e, les potentiels k\"ahl\'eriens  sous-convergent vers une fonction \`a singularit\'es analytiques le long d'une sous-vari\'et\'e, sur le compl\'ementaire de laquelle la fonction est solution de l'\'equation de Monge-Amp\`ere complexe homog\`ene. Cela confirme une attente de Tian-Yau.}

\end{prelims}


\newpage

\setcounter{tocdepth}{1} \tableofcontents

\section{Introduction}
Let $X^n$ be a Fano manifold, i.e. a compact complex manifold with $c_1(X)>0$. A K\"ahler-Einstein metric on $X$ is a K\"ahler metric $\omega$ which satisfies
$$\Ric(\omega)=\omega.$$
This implies that $[\omega]=c_1(X)$. We assume throughout this paper that $X$ does not admit a K\"ahler-Einstein metric. This is known to be equivalent to K-unstability by \cite{CDS} (see also \cite{Ti4}), but we will not use this fact.

We fix a K\"ahler metric $\omega$ with $[\omega]=c_1(X)$, with Ricci potential $F$ defined by $\Ric(\omega)=\omega+\ddbar F$ (normalized by $\int_X (e^F-1)\omega^n=0$). We consider K\"ahler metrics $\omega_t$ with $[\omega_t]=c_1(X)$ which satisfy
$$\Ric(\omega_t)=t\omega_t+(1-t)\omega.$$
We can write $\omega_t=\omega+\ddbar\vp_t$ and the functions $\vp_t$ solve the complex Monge-Amp\`ere equation \cite{Ya}
\begin{equation}\label{ma}
\omega_t^n=e^{F-t\vp_t}\omega^n.
\end{equation}
A solution $\vp_t$ exists on $[0,R(X))$ where $R(X)\leq 1$ is the greatest lower bound for the Ricci curvature of K\"ahler metrics in $c_1(X)$ \cite{Sz}. It is known \cite{Si,Ti} that since $X$ does not admit K\"ahler-Einstein metrics, we must have that $\lim_{t\to R(X)}\sup_X\vp_t=+\infty$. We fix a sequence $t_i\to R(X)$ and write $\vp_i:=\vp_{t_i}$  and $\omega_i:=\omega_{t_i}$. Using this result, together with multiplier ideal sheaves, Nadel \cite[Proposition 4.1]{Na} proved that (up to passing to a subsequence) the measures $\omega_i^n$ converge to zero (as measures) on compact sets of $X\backslash V$ for some proper analytic subvariety $V\subset X$, and in \cite{To2} the second-named author improved this to uniform convergence.

By weak compactness of closed positive currents in a fixed cohomology class, up to subsequences we can extract a limit $\rho$ of $\vp_i-\sup_X\vp_i$ (which may depend on the subsequence), which is an unbounded $\omega$-psh function, and the convergence happens in the $L^1$ topology.

In their work \cite[p.178]{TY}, Tian-Yau expressed the expectation that $\rho$ should have logarithmic poles along a proper analytic subvariety $V\subset X$, and that it should satisfy $(\omega+\ddbar\rho)^n=0$ on $X\backslash V$, so that $\rho$ could be thought of as a kind of pluricomplex Green's function (see also \cite[p.238]{Ti} and \cite[p.109]{Ti2}).

In this note we confirm Tian-Yau's expectation:

\begin{theorem}\label{main} Let $X$ be a Fano manifold without a K\"ahler-Einstein metric, and let $\omega_t=\omega+\ddbar\vp_t$ be the solutions of the continuity method \eqref{ma}.
Given any sequence $t_i\in[0,R(X))$ with $t_i\to R(X)$, choose a subsequence such that $\vp_{t_i}-\sup_{X}\vp_{t_i}$ converge in $L^1(X)$ to an $\omega$-psh function $\rho$.
Then we can find $m\geq 1$ and an $\omega$-psh function $\psi$ on $X$ with analytic singularities
\begin{equation}\label{model}
\psi=\frac{1}{m}\log\sum_{j=1}^p\lambda_j^2|S_j|^2_{h^m},
\end{equation}
for some $\lambda_j\in (0,1]$ and some sections $S_j\in H^0(X,K_X^{-m})$, with nonempty common zero locus $V\subset X$ such that
$\rho-\psi$ is bounded on $X$, and on $X\backslash V$ we have
\begin{equation}\label{zero}
(\omega+\ddbar\rho)^n=0,
\end{equation}
where the Monge-Amp\`ere product is in the sense of Bedford-Taylor \cite{BT}.
\end{theorem}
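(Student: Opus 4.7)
The plan is to construct the model potential $\psi$ via a Bergman kernel argument applied to the normalized potentials $\tilde\vp_i := \vp_i - \sup_X\vp_i$, establish the two-sided comparison $|\rho - \psi|\leq C$ by pluripotential methods, and finally derive the Monge-Amp\`ere equation on $X\setminus V$ via Bedford-Taylor continuity and the uniform decay of $\omega_i^n$ from \cite{To2}.

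For fixed $m\geq 1$, I would equip $H^0(X,K_X^{-m})$ with the inner product $\langle s,s'\rangle_i := \int_X \langle s,s'\rangle_{h^m}\,e^{-m\tilde\vp_i}\omega^n$, choose an orthonormal basis $\{s_{i,\alpha}\}$, and set $\psi_i := \tfrac{1}{m}\log\sum_\alpha|s_{i,\alpha}|_{h^m}^2$. The standard pointwise upper bound for Bergman kernels with plurisubharmonic weights, obtainable via the Ohsawa-Takegoshi $L^2$ extension technique, gives $\psi_i \leq \tilde\vp_i + C/m$, with $C$ independent of $i$. Extracting a subsequence and rescaling the $s_{i,\alpha}$ by factors $\lambda_\alpha\in(0,1]$ chosen so that the rescaled sections converge to nonzero limits, one obtains $S_1,\ldots,S_p\in H^0(X, K_X^{-m})$ and a limit $\psi$ of the form \eqref{model} with $\psi\leq \rho+C/m$ globally. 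The common zero locus $V$ of the $S_j$ is non-empty: if $V$ were empty, $\psi$ would be bounded below, forcing $\tilde\vp_i\geq -C''$ uniformly for $i$ large, which together with \eqref{ma} and the normalization $\int\omega_i^n=\int\omega^n$ would contradict $\sup_X\vp_i\to\infty$.

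The reverse comparison $\rho\leq\psi+C$ is the main technical step. The plan is to show first that the $\tilde\vp_i$ are locally uniformly bounded on $X\setminus V$: on any compact $K\subset X\setminus V$, the uniform decay $\omega_i^n\to 0$ of \cite{To2} combined with Ko\l odziej's local $L^\infty$-estimate applied to \eqref{ma} in charts yields a uniform lower bound on $\tilde\vp_i|_K$. Taking $L^1$-limits, $\rho$ is locally bounded on $X\setminus V$; since $\psi$ is smooth there, $\rho-\psi$ is bounded on compacta of $X\setminus V$. Propagating this to a global upper bound near $V$ requires an analytic-singularities argument: the $L^2$-integrability $\int e^{-m(\rho-\psi)}\omega^n<\infty$ built into the Bergman construction prevents $\rho-\psi$ from having a positive Lelong number along any component of $V$, which together with upper semicontinuity and a maximum principle gives a global upper bound.

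Once $|\rho-\psi|\leq C$ is established, $\rho$ is locally bounded on $X\setminus V$, and the $\tilde\vp_i$ are locally uniformly bounded there by the Ko\l odziej step. Bedford-Taylor continuity of the Monge-Amp\`ere operator under $L^1$-convergence of locally bounded $\omega$-psh functions then gives $(\omega+\ddbar\tilde\vp_i)^n = \omega_i^n\to(\omega+\ddbar\rho)^n$ weakly on compacta of $X\setminus V$. Since $\omega_i^n\to 0$ uniformly there by \cite{To2}, equation \eqref{zero} follows. The main obstacle is the reverse comparison $\rho\leq\psi+C$ in the singular direction near $V$, which requires quantitative pluripotential control relating the analytic singularities of the geometric limit $\rho$ to those of the algebraically-defined model $\psi$.
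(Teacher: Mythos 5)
Your proposal has two genuine gaps, each at a critical juncture.

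\textbf{The two-sided comparison $|\rho-\psi|\leq C$.} The Bergman-kernel argument you sketch (Ohsawa--Takegoshi extension) yields only the ``easy'' one-sided bound $\psi_i\leq\tilde\vp_i+C/m$, which corresponds to the elementary upper bound $\rho_m(\omega_t)\leq C$ on the density of states obtained here by Moser iteration \eqref{upperbd}. The reverse inequality $\tilde\vp_i\leq\psi_i+C$ is equivalent to a \emph{lower} bound $\rho_m(\omega_t)\geq C^{-1}$ uniform in $t$, and this is precisely Sz\'ekelyhidi's partial $C^0$ estimate \cite{Sz2}, a deep theorem (using Cheeger--Colding--Tian theory and H\"ormander's estimate along a degenerating family) which your argument does not invoke and cannot recover. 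Your proposed substitutes do not close the gap: Ko\l odziej's $L^\infty$ estimate on a compact manifold requires uniform $L^p$, $p>1$, control of the right-hand side $e^{F-t\vp_t}$, which is exactly what fails when $X$ has no K\"ahler--Einstein metric (and the local version needs boundary data you do not have), while the Lelong-number/maximum-principle step is a heuristic, not an argument — $L^2$ integrability of $e^{-m(\rho-\psi)}$ controls Lelong numbers along divisors but does not by itself produce a global $L^\infty$ bound on $\rho-\psi$. In the paper, \eqref{partiall} is extracted directly from the partial $C^0$ estimate in Proposition \ref{partc0}, and $|\rho-\psi|\leq C$ then falls out by passing to the limit (Lemma \ref{parr}).

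\textbf{The vanishing of $(\omega+\ddbar\rho)^n$ on $X\setminus V$.} Your final step asserts ``Bedford--Taylor continuity of the Monge--Amp\`ere operator under $L^1$-convergence of locally bounded $\omega$-psh functions.'' This is false: $L^1_{\mathrm{loc}}$ convergence of a locally uniformly bounded sequence of (quasi-)psh functions does \emph{not} imply weak convergence of the Monge--Amp\`ere measures (Bedford--Taylor continuity requires monotone convergence, or more generally convergence in capacity, and $L^1$ plus uniform boundedness does not give that; there are classical counterexamples due to Cegrell). Even setting aside the comparison estimate, you therefore cannot conclude $(\omega+\ddbar\tilde\vp_i)^n\to(\omega+\ddbar\rho)^n$ from $\tilde\vp_i\to\rho$ in $L^1$. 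The paper bypasses this entirely with a structural argument: it proves in Theorem \ref{zeroma} that \emph{every} $\omega$-psh function with the same singularity type as $\psi$ has vanishing Bedford--Taylor Monge--Amp\`ere on $X\setminus V$. This rests on the key relation \eqref{key}, exhibiting $\mu^*(\omega+\ddbar\psi)$ as the sum of $\frac{1}{m}\Psi^*\omega_{FS,p}$ (a pullback from the image variety $Y$) and the current of integration on an exceptional divisor, together with the dimension drop $\dim Y<\dim X$ of Proposition \ref{dim} — the latter being exactly where the nonexistence of a K\"ahler--Einstein metric is used, via \eqref{tozero} and the Schwarz-type estimate \eqref{sc}. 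The Stein factorization then reduces the Monge--Amp\`ere mass on compacts of $X\setminus V$ to an $n$-fold wedge of a $(1,1)$-current pulled back from a space of dimension $<n$, which vanishes identically. This argument is robust (works for all $\eta\in\mathcal{C}$) and makes no claim about continuity of the Monge--Amp\`ere operator along the sequence.

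Your construction of $\psi$, the non-emptiness of $V$, and the use of the decay $\omega_i^n\to 0$ from \cite{To2} are all in the right spirit, but without the partial $C^0$ estimate and without the geometric dimension-drop argument the proof does not go through.
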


In particular, Theorem \ref{main} implies that the non-pluripolar Monge-Amp\`ere operator of $\rho$ (defined in \cite{BEGZ}) vanishes identically on $X$. On the other hand, there is another meaningful Monge-Amp\`ere operator that can be applied to $\rho$. Indeed, the fact that $\rho-\psi\in L^\infty(X)$ implies that $\rho$ itself has analytic singularities. In \cite{ABW} Andersson-B\l ocki-Wulcan defined a Monge-Amp\`ere operator for $\omega$-psh functions with analytic singularities (generalizing earlier work of Andersson-Wulcan \cite{AW} in the local setting). In general, applying this Monge-Amp\`ere operator to $\rho$ will produce a Radon measure $\mu$ on $X$ (which may be identically zero in some cases), which by Theorem \ref{main} is supported on the analytic set $V$, thus providing geometrically interesting examples of unbounded quasi-psh functions on compact K\"ahler manifolds with Monge-Amp\`ere operator concentrated on a subvariety (see also \cite{ACCP,ACP} for related results in the local setting). In particular, this answers \cite[Question 1 (c)]{MA}, an open problem raised at the AIM workshop ``The complex Monge-Amp\`ere equation'' in August 2016 (cf. the related \cite[Question 12]{DGZ}).

Note that in general a formula for the total mass of $\mu$ is proved in \cite[Theorem 1.2]{ABW}, and it satisfies
$$\int_X\mu\leq \int_X\omega^n,$$
with strict inequality in general (but it is not hard to see that if $\dim X=2$ and $V$ is a finite set then equality holds). Therefore, the measure $\mu$ is in general different from the measures that one obtains as weak limits of $(\omega+\ddbar\vp_i)^n$ (up to subsequences), whose total mass is always equal to $\int_X\omega^n$.

\begin{remark}[Remark added in proof] After this work was posted on the arXiv, and partly prompted by it, B\l ocki \cite{Bl} modified the definition of the Monge-Amp\`ere operator for $\omega$-psh functions with analytic singularities of \cite{AW,ABW}, and with his definition the total mass is always equal to $\int_X\omega^n$. It is an interesting question to determine whether this Monge-Amp\`ere operator equals the weak limit of $(\omega+\ddbar\vp_i)^n$.
\end{remark}

\begin{remark}
As in the second-named author's previous work \cite{To2}, Theorem \ref{main} has a direct counterpart for solutions of the normalized K\"ahler-Ricci flow, instead of the continuity method \eqref{ma}. The statement is identical to Theorem \ref{main}, except that now the sequence $t_i$ goes to $+\infty$. The proof is also almost verbatim the same, and the partial $C^0$ estimate along the flow is proved in \cite{CW,CW2} (see also \cite{Ba}). All other ingredients used also have well-known counterparts for the flow (see \cite{To2}). We leave the simple details to the interested reader.
\end{remark}

\begin{remark}
The behavior of the solutions $\omega_t$ of \eqref{ma} as $t\to R(X)$ has been investigated in the past. If the manifold is K-stable, \cite{DSz} show that $\omega_t$ converge smoothly to a K\"ahler-Einstein metric. If on the other hand no such metric exists, the blowup behavior of $\omega_t$ has been investigated in \cite{Na,To2} in the setting of this paper, and also in \cite{DSz,Li,SZ} by allowing reparametrizations of the metrics by diffeomorphisms.
\end{remark}

The proof of Theorem \ref{main} relies on the partial $C^0$ estimate for solutions of \eqref{ma} which was established by Sz\'ekelyhidi \cite{Sz2}. We recall this in section \ref{sectp}, together with a well-known reformulation of this estimate (Proposition \ref{partc0}). In section \ref{sects} we observe that this gives us the singularity model function $\psi$ in \eqref{model}, and it also implies that $\rho$ has the same singularity type as $\psi$. In section \ref{sectc} we show the general fact that every $\omega$-psh function on $X$ with the same singularity type as $\psi$ has vanishing Monge-Amp\`ere operator outside $V$, thus proving Theorem \ref{main}. This relies on a geometric understanding of the rational map defined by the sections $\{S_j\}$ as in Theorem \ref{main}. Lastly, in section \ref{sectg} we discuss the pluricomplex Green's function with the same singularity type as $\psi$.

\bigskip

{\bf Acknowledgments. } We thank Z. B\l ocki, T.C. Collins, S. Ko\l odziej and D.H. Phong for discussions on this topic at the AIM workshop ``The complex Monge-Amp\`ere equation'' in August 2016, E. Wulcan for her interest in this work and A. Rashkovskii and the referee for useful comments. The second-named author is also grateful to S.-T. Yau for related discussions over the years. This work was completed during the second-named author's visit to the Institut Henri Poincar\'e in Paris (supported by a Chaire Poincar\'e funded by the Clay Mathematics Institute) which he would like to thank for the hospitality and support.

\section{The partial $C^0$ estimate}\label{sectp}
To start we fix some notation. We choose a Hermitian metric $h$ on $K_X^{-1}$ with curvature $R_h=\omega$ (such $h$ is unique up to scaling), and let $h^m$ be the induced metric on $K_X^{-m}$, for all $m\geq 1$. Let $N_m=\dim H^0(X,K_X^{-m})$, and for any $m\geq 1$ define the density of states function
$$\rho_m(\omega)=\sum_{j=1}^{N_m} |S_j|^2_{h^m},$$
where $S_1,\dots,S_{N_m}$ are a basis of $H^0(X,K_X^{-m})$ which is orthonormal with respect to the $L^2$ inner product $\int_X\langle S_1,S_2\rangle_{h^m}\omega^n$. Clearly $\rho_m(\omega)$ is independent of the choice of basis, and is also unchanged if we scale $h$ by a constant. The integral  $\int_X \rho_m(\omega)\omega^n$ equals $N_m$, and if $m$ is sufficiently large so that $K_X^{-m}$ is very ample, then $\rho_m(\omega)$ is strictly positive on $X$.
If we apply this same construction to the metrics $\omega_t$ and Hermitian metrics $h_t=he^{-\vp_t}$ we get a density of states function $\rho_m(\omega_t).$
Following \cite{Ti2}, we say that a ``partial $C^0$ estimate'' holds if there exist $m\geq 1$ and a constant $C>0$ such that
\begin{equation}\label{partialc0}
\inf_{X}\rho_{m}(\omega_t)\geq C^{-1},
\end{equation}
holds for all $t\in [0,R(X)).$ The reason for this name is explained by the following proposition, which is essentially well-known (see \cite[Lemma 2.2]{Ti2} and \cite[Proposition 5.1]{To}),
but we provide the details for convenience:

\begin{proposition}\label{partc0}
If a partial $C^0$ estimate holds then there exists $m\geq 1$, such that for all $\ve>0$ we can find a constant $C>0$  so that for all $t\in[\ve,R(X))$ we can find real numbers $1=\lambda_1(t)\geq\dots\geq\lambda_{N_m}(t)>0$ and a basis $\{S_j(t)\}_{1\leq j\leq N_m}$ of $H^0(X,K_{X}^{-m})$, orthonormal with respect to the $L^2$ inner product of $\omega, h^m$, such that for all $t\in[0,R(X))$ we have
\begin{equation}\label{partiall}
\sup_{X}\left|\vp_t-\sup_{X}\vp_t-\frac{1}{m}\log\sum_{j=1}^{N_m} \lambda_j(t)^2|S_j(t)|^2_{h^m}\right|\leq C.
\end{equation}
\end{proposition}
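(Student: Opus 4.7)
The plan is to derive \eqref{partiall} in two steps: first express $\vp_t$, up to a bounded error, in terms of an orthonormal basis for the \emph{moving} $L^2$-product $\langle\cdot,\cdot\rangle_t := \int_X\langle\cdot,\cdot\rangle_{h_t^m}\omega_t^n$ on $H^0(X,K_X^{-m})$ (with $h_t=he^{-\vp_t}$), and then change basis to one orthonormal for the fixed product $\langle\cdot,\cdot\rangle_0 := \int_X\langle\cdot,\cdot\rangle_{h^m}\omega^n$.

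For step one, let $\{\tilde S_j(t)\}$ be a $\langle\cdot,\cdot\rangle_t$-orthonormal basis. Since $h_t^m = h^m e^{-m\vp_t}$, the definition of the density of states gives
\begin{equation*}
\rho_m(\omega_t)(x) = \sum_j |\tilde S_j(t)(x)|^2_{h_t^m} = e^{-m\vp_t(x)}\sum_j|\tilde S_j(t)(x)|^2_{h^m},
\end{equation*}
so $\vp_t = \frac{1}{m}\log\sum_j|\tilde S_j(t)|^2_{h^m} - \frac{1}{m}\log\rho_m(\omega_t)$. The partial $C^0$ estimate \eqref{partialc0} gives $\rho_m(\omega_t)\geq c>0$, and a standard Bergman-kernel upper bound $\rho_m(\omega_t)\leq C$ follows from the submean-value property for holomorphic sections using the Ricci lower bound $\Ric(\omega_t) = t\omega_t + (1-t)\omega \geq 0$. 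Thus $|\frac{1}{m}\log\rho_m(\omega_t)|\leq C$, which gives
\begin{equation*}
\left|\vp_t - \frac{1}{m}\log\sum_j|\tilde S_j(t)|^2_{h^m}\right| \leq C \quad \text{on } X.
\end{equation*}

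For step two, compare the two inner products via the unique positive self-adjoint (w.r.t.\ $\langle\cdot,\cdot\rangle_0$) operator $A_t$ with $\langle\cdot,\cdot\rangle_t = \langle A_t\cdot,\cdot\rangle_0$. Diagonalize $A_t$ in a $\langle\cdot,\cdot\rangle_0$-orthonormal basis $\{S_j(t)\}$ with eigenvalues $0<\mu_1(t)\leq\dots\leq\mu_{N_m}(t)$; then $\tilde S_j(t) = S_j(t)/\sqrt{\mu_j(t)}$, and setting $\lambda_j(t) := \sqrt{\mu_1(t)/\mu_j(t)}$ yields $1=\lambda_1(t)\geq\dots\geq\lambda_{N_m}(t)>0$ together with the pointwise identity $\sum_j|\tilde S_j(t)|^2_{h^m} = \mu_1(t)^{-1}\sum_j\lambda_j(t)^2|S_j(t)|^2_{h^m}$. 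Writing $\psi_t := \frac{1}{m}\log\sum_j\lambda_j(t)^2|S_j(t)|^2_{h^m}$, step one becomes $|\vp_t(x) - \psi_t(x) + \frac{1}{m}\log\mu_1(t)| \leq C$, with $\frac{1}{m}\log\mu_1(t)$ independent of $x$.

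The $\frac{1}{m}\log\mu_1(t)$ term then cancels after subtracting $\sup_X\vp_t$, provided $\sup_X\psi_t$ is uniformly bounded. But $\sum_j\lambda_j^2|S_j|^2 \leq \sum_j|S_j|^2 = \rho_m(\omega)$ is a fixed bounded function on $X$, while $\sum_j\lambda_j^2|S_j|^2 \geq |S_1(t)|^2$ has $\sup_X|S_1(t)|^2 \geq 1/\int_X\omega^n$ since $\|S_1(t)\|_{L^2(\omega)} = 1$; hence $|\sup_X\psi_t| \leq C$. Taking the sup in $x$ of the pointwise bound and subtracting from the pointwise version eliminates $\frac{1}{m}\log\mu_1(t)$ and yields $|\vp_t - \sup_X\vp_t - \psi_t| \leq C$, which is \eqref{partiall}. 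The main technical ingredient is the uniform upper bound on $\rho_m(\omega_t)$; the restriction $t\geq\varepsilon$ allows sufficient margin for whichever Bergman-kernel estimate one uses.
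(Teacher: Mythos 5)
Your proposal is correct and follows essentially the same route as the paper: reformulate the partial $C^0$ estimate via the density of states as $\big|\vp_t-\tfrac1m\log\sum_j|\tilde S_j(t)|^2_{h^m}\big|\leq C$, diagonalize the Gram matrix relating the moving and fixed $L^2$ products (equivalently, a singular value decomposition), and then absorb the overall normalization constant $\tfrac1m\log\mu_1(t)$ into $\sup_X\vp_t$. The only cosmetic difference is in how the constant absorption is handled: you observe directly that $|\sup_X\psi_t|\leq C$ (using $\lambda_j\leq 1=\lambda_1$, $\sum_j|S_j(t)|^2_{h^m}=\rho_m(\omega)$, and $\|S_1(t)\|_{L^2(\omega,h^m)}=1$) and feed it into the pointwise bound, whereas the paper proves $\big|\tfrac2m\log\mu_1(t)-\sup_X\vp_t\big|\leq C$ separately by evaluating the partial $C^0$ estimate and the Moser bound at the max of $\vp_t$; these are arithmetically the same step. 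One small point worth tightening: the Moser/Bergman upper bound $\rho_m(\omega_t)\leq C$ requires the uniform Ricci lower bound $\Ric(\omega_t)\geq\ve\omega_t$ (giving a Myers diameter bound and hence, via Croke--Li, a uniform Sobolev constant), not merely $\Ric(\omega_t)\geq 0$ as you write; your closing remark that $t\geq\ve$ supplies the needed margin is the right reason, but it should be stated as the source of the strict lower bound.
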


In the rest of the paper we will fix a value of $\ve>0$ once and for all, for example $\ve=R(X)/2$. The precise choice is irrelevant, since we are only interested in the behavior as $t\to R(X)$.
\begin{proof}
First, it is well-known that for all $m\geq 1$ and $\ve>0$ there is a constant $C$ such that for all $t\in [\ve,R(X))$ we have
\begin{equation}\label{upperbd}
\rho_m(\omega_t)\leq C.
\end{equation}
To see this, first observe for every $S\in H^0(X,K_X^{-m})$ we have
\begin{equation}\label{moser}
\Delta_{\omega_t}|S|^2_{h_t^m}=|\nabla S|^2_t -2m|S|^2_{h_t^m}\geq -2m|S|^2_{h_t^m},
\end{equation}
and that since $\Ric(\omega_t)\geq t\omega_t\geq \ve\omega_t$, Myers' Theorem gives a uniform upper bound for $\mathrm{diam}(X,\omega_t)$ and then Croke \cite{Cr} and Li \cite{LiP} show that the Sobolev constant of $(X,\omega_t)$ has a uniform upper bound. We can then apply Moser iteration to \eqref{moser} to get
\begin{equation}\label{supbd}
\sup_{X}|S|^2_{h_t^m}\leq C\int_{X}|S|^2_{h_t^m}\omega_t^n\leq C,
\end{equation}
provided we assume that $\int_{X}|S|^2_{h_t^m}\omega_t^n=1$. Taking now an orthonormal basis of sections and summing we obtain \eqref{upperbd}.

Thanks to \eqref{upperbd} we know that for $t\in [\ve,R(X))$ a partial $C^0$ estimate is equivalent to
\begin{equation}\label{mos1}
\sup_{X}|\log\rho_m(\omega_t)|\leq C.
\end{equation}
We now take a basis $\{\ti{S}_j(t)\}_{1\leq j\leq N_m}$ of $H^0(X,K_{X}^{-m})$ orthonormal with respect to the $L^2$ inner product of $\omega_t, h_t^m$ and notice that since $h_t^m=e^{-m\vp_t}h^m$ we clearly have
$$\vp_t=\frac{1}{m}\log\frac{\sum_{j=1}^{N_m}|\ti{S}_j(t)|^2_{h^m}}{\sum_{j=1}^{N_m}|\ti{S}_j(t)|^2_{h^m_t}},$$
which is equivalent to
\begin{equation}\label{mos2}
\vp_t-\frac{1}{m}\log \sum_{j=1}^{N_m}|\ti{S}_j(t)|^2_{h^m}=-\frac{1}{m}\log\rho_m(\omega_t).
\end{equation}
It follows from \eqref{mos1} and \eqref{mos2} that that for $t\in [\ve,R(X))$ a partial $C^0$ estimate is equivalent to an estimate
$$\sup_{X}\left|\vp_t-\frac{1}{m}\log \sum_{j=1}^{N_m}|\ti{S}_j(t)|^2_{h^m} \right|\leq C.$$
We now choose another basis $\{S_j\}_{1\leq j\leq N_m}$ of $H^0(X,K_{X}^{-m})$ orthonormal with respect to the $L^2$ inner product of $\omega, h^m$. After modifying $S_j$ and $\ti{S}_j(t)$ by $t$-dependent unitary transformations, we obtain orthonormal bases $\{S_j(t)\}_{1\leq j\leq N_m}$ with respect to $\omega,h^m$, and $\{\ti{S}_j(t)\}_{1\leq j\leq N_m}$ with respect to $\omega_t, h_t^m$ such that
$$\ti{S}_j(t)=\mu_j(t) S_j(t),$$
for some positive real numbers $\mu_j(t)$, with $\mu_1(t)\geq\dots\geq \mu_{N_m}(t)>0$.
We then let $\lambda_j(t)=\mu_j(t)/\mu_{1}(t)$ and we see that a partial $C^0$ estimate is equivalent to
\begin{equation}\label{part1}
\sup_{X}\left|\vp_t-\frac{2}{m}\log\mu_{1}(t)-\frac{1}{m}\log \sum_{j=1}^{N_m}\lambda_j(t)^2|S_j(t)|^2_{h^m} \right|\leq C.
\end{equation}
We now claim that if a partial $C^0$ estimate holds, then for all $t\in[\ve,R(X))$ we also have
\begin{equation}\label{part2}
\left|\frac{2}{m}\log\mu_{1}(t)-\sup_{X}\vp_t \right|\leq C.
\end{equation}
Once this is proved, combining \eqref{part1} and \eqref{part2} we get \eqref{partiall}. To prove \eqref{part2}, first use \eqref{supbd} to get
$$C\geq \sup_{X}|\ti{S}_{1}(t)|^2_{h_t^m}\geq\mu_{1}(t)^2 \sup_{X}|S_{1}(t)|^2_{h^m}e^{-m\sup_{X}\vp_t},$$
and the fact $\int_{X}|S_{1}(t)|^2_{h^m}\omega^n=1$
implies that $\sup_{X}|S_{1}(t)|^2_{h^m}\geq 1/\mathrm{Vol}(X,\omega),$
and so
$$\left(\frac{2}{m}\log\mu_{1}(t)-\sup_{X}\vp_t\right)\leq C.$$
On the other hand the partial $C^0$ estimate \eqref{partialc0} implies that
\begin{equation}\label{part3}
C^{-1}\leq \rho_{m}(\omega_t)=\sum_{j=1}^{N_m}|\ti{S}_j(t)|^2_{h_t^m}\leq
\mu_{1}(t)^2\sum_{j=1}^{N_m}|S_j(t)|^2_{h^m}e^{-m\vp_t},
\end{equation}
and we clearly have that
\begin{equation}\label{c0norm}
\sup_{j} \sup_{X}|S_j(t)|^2_{h^m}\leq C,
\end{equation}
since the sections $\{S_j(t)\}$ are just varying in a compact unitary group (or one can also repeat the Moser iteration argument of \eqref{upperbd} for the fixed metric $\omega$).
This together with \eqref{part3}, evaluated at the point where $\vp_t$ achieves its maximum,
gives the reverse inequality
$$\left(\sup_{X}\vp_t-\frac{2}{m}\log\mu_{1}(t)\right)\leq C,$$
which completes the proof of \eqref{part2}.
\end{proof}

\section{The singularity model function}\label{sects}
The next goal is to use the partial $C^0$ estimate in Proposition \ref{partc0} to construct a singular $\omega$-psh function $\psi$ which will have the same singularity type of any weak limit of the normalized solutions $\vp_i-\sup_X\vp_i$ of the continuity method.

Let the notation be as in Proposition \ref{partc0}, and in particular we fix once and for all a value of $m\geq 1$ given there. We can find a sequence $t_i\to R(X)$ and an $\omega$-psh function $\rho$ with $\sup_X\rho=0$ such that $\vp_i-\sup_X\vp_i\to \rho$ in $L^1(X)$, and pointwise a.e. Passing to a subsequence, we can find a basis $\{S_j\}_{1\leq j\leq N_m}$ of $H^0(X,K_{X}^{-m})$ orthonormal with respect to the $L^2$ inner product of $\omega, h^m$, such that
$S_j(t_i)\to S_j$ smoothly as $i\to\infty$, for all $1\leq j\leq N_m$. The change of basis matrix from $\{S_j\}_{1\leq j\leq N_m}$ to $\{S_j(t)\}_{1\leq j\leq N_m}$ induces an automorphism $\sigma(t)$ of $\mathbb{CP}^{N_m-1}$, such that $\sigma(t_i)\to\mathrm{Id}$ smoothly as $i\to\infty$.

For ease of notation, write
$$\psi_t=\frac{1}{m}\log\sum_{j=1}^{N_m} \lambda_j(t)^2|S_j(t)|^2_{h^m}.$$
These functions are K\"ahler potentials for $\omega$ since
\begin{equation}\label{psh}
\omega+\ddbar\psi_t=\frac{\iota^*\sigma(t)^*\tau(t)^*\omega_{FS}}{m}>0,
\end{equation}
where $\iota:X\hookrightarrow\mathbb{CP}^{N_m-1}$ is the Kodaira embedding map given by the sections $\{S_j\}_{1\leq j\leq N_m},$
the map $\tau(t)$ is the automorphism of $\mathbb{CP}^{N_m-1}$ induced by the diagonal matrix with entries $\{\lambda_j(t)\}_{1\leq j\leq N_m}$, 
and $\omega_{FS}$ is the Fubini-Study metric on $\mathbb{CP}^{N_m-1}$. The identity in \eqref{psh} follows directly from the definition of the Fubini-Study metric $\omega_{FS}$ on $\mathbb{CP}^{N_m-1}$, which on $\mathbb{C}^{N_m}\backslash\{0\}$ is given explicitly by $\omega_{FS}=\ddbar\log\sum_{j=1}^{N_m}|z_j|^2,$ and from the fact that the curvature of $h$ is $\omega$.

Up to passing to a subsequence of $t_i$, we may assume that $\lambda_j(t_i)\to \lambda_j$ as $i\to\infty$ for all $j$, and we have
$$1=\lambda_1\geq \dots\geq \lambda_p>0=\lambda_{p+1}=\dots=\lambda_{N_m},$$
for some $1\leq p<N_m$. The case $p=N_m$ is impossible because by \eqref{partiall} it would imply a uniform $L^\infty$ bound for $\vp_t$ and so $X$ would admit a K\"ahler-Einstein metric. For the same reason, the set $V:=\{S_1=\cdots=S_p=0\}$ must be a nonempty proper analytic subvariety of $X$.

Note that thanks to \eqref{partiall} we can write
$$\omega_t^n=e^{F-t(\vp_t-\sup_X\vp_t)} e^{-t\sup_X\vp_t}\omega^n\leq Ce^{t\psi_t}e^{-t\sup_X\vp_t}\omega^n,$$
and since the term $e^{t\psi_t}$ is uniformly bounded on compact sets of $X\backslash V$, we see immediately that
\begin{equation}\label{tozero}
\omega_t^n\to 0,
\end{equation}
uniformly on compact sets of $X\backslash V$ (this result was proved in \cite{To2} without using the partial $C^0$ estimate, which was not available at the time, with weaker results established earlier in \cite{Na}).

Let then $$\psi=\frac{1}{m}\log\sum_{j=1}^p\lambda_j^2|S_j|^2_{h^m},$$
which is a smooth function on $X\backslash V$ which approaches $-\infty$ uniformly on $V$. Since $e^{m\psi_t}\to e^{m\psi}$ smoothly on $X$, and since $\psi_t$ are smooth and $\omega$-psh, it follows that $\psi$ is $\omega$-psh. This will be our singularity model function in the rest of the argument, as we now explain:

\begin{lemma}\label{parr}
Define the class
$$\mathcal{C}=\{\eta\in PSH(X,\omega)\ |\ \eta-\psi\in L^\infty(X)\},$$
of $\omega$-psh functions with the same singularity type as $\psi$.
Then we have that $\rho\in\mathcal{C}$.
\end{lemma}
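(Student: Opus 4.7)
The plan is to pass to the limit in the partial $C^0$ estimate from Proposition \ref{partc0}, which guarantees that
$$\sup_X \left|\vp_{t_i} - \sup_X \vp_{t_i} - \psi_{t_i}\right| \leq C$$
for a constant $C$ independent of $i$, where recall
$\psi_t = \frac{1}{m}\log \sum_{j=1}^{N_m} \lambda_j(t)^2 |S_j(t)|^2_{h^m}$.

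The first step is to establish that $\psi_{t_i} \to \psi$ pointwise on all of $X$. This will follow from the already-recorded convergences $\lambda_j(t_i) \to \lambda_j$ and $S_j(t_i) \to S_j$ (smoothly): the function
$$e^{m\psi_{t_i}} = \sum_{j=1}^{N_m} \lambda_j(t_i)^2 |S_j(t_i)|^2_{h^m}$$
then converges smoothly on $X$ to $e^{m\psi} = \sum_{j\leq p} \lambda_j^2 |S_j|^2_{h^m}$. On $X \setminus V$ this yields $\psi_{t_i} \to \psi$ smoothly (locally uniformly), while on $V$ the limit of the exponentials is $0$, so $\psi_{t_i} \to -\infty = \psi|_V$ pointwise as well.

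Now, because $\vp_{t_i} - \sup_X \vp_{t_i} \to \rho$ in $L^1(X)$, a further subsequence converges a.e. pointwise. Combining the a.e. convergence of $\vp_{t_i} - \sup_X \vp_{t_i}$ with the pointwise convergence of $\psi_{t_i}$, and taking the limit in the uniform estimate, gives
$$|\rho - \psi| \leq C \quad \text{a.e. on } X,$$
which is exactly the statement $\rho - \psi \in L^\infty(X)$, i.e.\ $\rho \in \mathcal{C}$. Since $\rho \in \PSH(X, \omega)$ by construction, the lemma follows.

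There is no substantive obstacle: the only subtlety is tracking the behavior of $\psi_{t_i}$ on the subvariety $V$, and this is neatly sidestepped by the fact that the exponentials $e^{m \psi_{t_i}}$ converge smoothly everywhere on $X$, even though the logarithms themselves blow up along $V$. If one later needs the pointwise rather than a.e.\ bound, the upper inequality $\rho \leq \psi + C$ extends to $X\setminus V$ using upper semicontinuity of $\rho$ and smoothness of $\psi$ there, while the lower inequality $\rho \geq \psi - C$ follows by observing that $\max(\rho,\psi - C) \in \PSH(X,\omega)$ equals $\rho$ a.e.\ and hence everywhere, as two $\omega$-psh functions equal almost everywhere agree pointwise.
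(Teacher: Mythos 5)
Your proof is correct and follows essentially the same path as the paper: pass to an a.e.-convergent subsequence, take the limit in the uniform estimate \eqref{partiall} using the (already-established) pointwise convergence $\psi_{t_i}\to\psi$, and then upgrade the a.e.\ bound to the required $L^\infty$ statement. The paper disposes of the final upgrade with a citation to ``elementary properties of psh functions'' (Gunning, Theorem K.15); your explicit arguments for the two directions are a reasonable unpacking of that reference, though for the upper inequality the precise tool is the sub-mean value property of $\omega$-psh functions (a psh function cannot exceed its essential upper bound) rather than upper semicontinuity alone, which only controls the $\limsup$ in the wrong direction.
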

\begin{proof}
Recall that we have $\vp_i-\sup_X\vp_i\to\rho$ a.e. on $X$.
Thanks to \eqref{partiall}, the function $\rho$ satisfies
\begin{equation}\label{est}
|\rho-\psi|\leq C,
\end{equation}
a.e. on $X$, which implies the same inequality on all of $X$ by elementary properties of psh functions (cf. \cite[Theorem K.15]{Gu}), thus showing that $\rho\in\mathcal{C}$.
\end{proof}

\section{Understanding the class $\mathcal{C}$}\label{sectc}
We now exploit the geometry of our setting to gain a better understanding of the class of functions $\mathcal{C}$.

The sections $\{\lambda_jS_j\}_{1\leq j\leq p}$ define a rational map $\Phi:X\dashrightarrow\mathbb{CP}^{p-1}$, with indeterminacy locus $Z\subset V$ (this inclusion is in general strict, since $\mathrm{codim} Z\geq 2$ while $V$ may contain divisorial components). Let $Y$ be the image of $\Phi$, i.e. the closure of $\Phi(X\backslash Z)$ in $\mathbb{CP}^{p-1}$, which is an irreducible projective variety. By resolving the indeterminacies of $\Phi$ we get a modification $\mu:\ti{X}\to X$, obtained as a sequence of blowups with smooth centers, and a holomorphic map $\Psi:\ti{X}\to Y$ such that $\Psi=\Phi\circ\mu$ holds on $\ti{X}\backslash \mu^{-1}(Z)$. We may also assume without loss of generality that $\mu$ principalizes the ideal sheaf generated by $\{S_j\}_{1\leq j\leq p}$, so that we have
$$\mu^*(\omega+\ddbar\psi)=\theta+[E],$$
where $E$ is an effective $\mathbb{R}$-divisor with $\mu(E)\subset V$, and $\theta$ is a smooth closed semipositive $(1,1)$ form on $\ti{X}$. We will denote by $\omega_{FS,p}$ the Fubini-Study metric on $\mathbb{CP}^{p-1}$. To identify $\theta$, note that on $X\backslash V$ we have by definition $\omega+\ddbar\psi=\frac{\Phi^*\omega_{FS,p}}{m},$ and so on $\ti{X}\backslash \mu^{-1}(V)$ we have
$$\mu^*(\omega+\ddbar\psi)=\frac{\mu^*\Phi^*\omega_{FS,p}}{m}=\frac{\Psi^*\omega_{FS,p}}{m},$$
and so $\theta=\frac{\Psi^*\omega_{FS,p}}{m}$ on $\ti{X}\backslash \mu^{-1}(V)$, and hence everywhere since both sides of this equality are smooth forms on all of $\ti{X}$. This proves the key relation
\begin{equation}\label{key}
\mu^*(\omega+\ddbar\psi)=\frac{\Psi^*\omega_{FS,p}}{m}+[E].
\end{equation}

Let $\ti{X}\overset{\nu}{\to}\ti{Y}\overset{q}{\to}Y$ be the Stein factorization of $\Psi$, where $\ti{Y}$ is an irreducible projective variety, the map $\nu$ has connected fibers, and $q$ is a finite morphism.

We have that $q^*\omega_{FS,p}$ is a smooth semipositive $(1,1)$ form on $\ti{Y}$, in the sense of analytic spaces. Since $\nu$ has compact connected fibers, a standard argument shows that the set of $\frac{\Psi^*\omega_{FS,p}}{m}$-psh functions on $\ti{X}$ can be identified with the set of (weakly) $\frac{q^*\omega_{FS,p}}{m}$-psh functions on $\ti{Y}$ via $\nu^*$ (indeed the restriction of every $\frac{\Psi^*\omega_{FS,p}}{m}$-psh function to any fiber of $\nu$ is plurisubharmonic and hence constant on that fiber). We will use this standard argument several other times in the following.

Here and in the following, as in \cite{De}, a weakly quasi-psh function on a compact analytic space means a quasi-psh function on its regular part which is locally bounded above near the singular set. As shown in \cite[\S 1]{De}, weakly quasi-psh functions are the same as usual quasi-psh functions if the analytic space is normal, and otherwise they can be identified with quasi-psh functions on its normalization.

\begin{proposition}\label{corr}
Given any function $\eta\in\mathcal{C}$, there is a unique bounded weakly $\frac{q^*\omega_{FS,p}}{m}$-psh function $u$ on $\ti{Y}$ such that
\begin{equation}\label{rel}
\mu^*\eta=\mu^*\psi+\nu^*u.
\end{equation}
Conversely, given any bounded weakly $\frac{q^*\omega_{FS,p}}{m}$-psh function $u$ on $\ti{Y}$ there is a unique function $\eta\in\mathcal{C}$ such that \eqref{rel} holds.
\end{proposition}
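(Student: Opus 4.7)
My plan is to lift $\eta$ to the resolution $\ti X$ via $\mu^*$, subtract off $\mu^*\psi$ using the key identity \eqref{key} to obtain a bounded function whose $\ddbar$ is controlled by $\Psi^*\omega_{FS,p}/m$, and then descend to $\ti Y$ via the connected fibers of $\nu$ in the Stein factorization.

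For the forward direction, given $\eta \in \mathcal{C}$, define $g := \mu^*\eta - \mu^*\psi$ on $\ti X \setminus E$. Since $\eta - \psi$ is bounded on $X$ and $\mu^*\psi$ is smooth off $E$, the function $g$ is bounded there. Using \eqref{key} together with $[E]\equiv 0$ on $\ti X \setminus E$,
\[ \ddbar g + \frac{\Psi^*\omega_{FS,p}}{m} = \mu^*(\omega + \ddbar \eta) \geq 0, \]
so $g$ is $\tfrac{\Psi^*\omega_{FS,p}}{m}$-psh on $\ti X \setminus E$. Being bounded, $g$ extends uniquely across the analytic set $E$ as a bounded such function on all of $\ti X$, by the standard extension theorem for bounded quasi-psh functions across analytic subsets.

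Since $\Psi = q\circ\nu$, the form $\Psi^*\omega_{FS,p} = \nu^*(q^*\omega_{FS,p})$ vanishes identically on each fiber of $\nu$. Hence $g|_{\nu^{-1}(y)}$ is plurisubharmonic on a compact connected analytic space and so constant by the maximum principle, producing a unique bounded function $u$ on $\ti Y$ with $g = \nu^*u$. Applying the standard descent argument invoked earlier in the paper (surjectivity of $\nu$ together with $\ddbar g + \tfrac{\Psi^*\omega_{FS,p}}{m}\geq 0$) shows that $u$ is weakly $\tfrac{q^*\omega_{FS,p}}{m}$-psh on $\ti Y$, establishing \eqref{rel}.

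For the converse, given a bounded weakly $\tfrac{q^*\omega_{FS,p}}{m}$-psh function $u$ on $\ti Y$, set $f := \mu^*\psi + \nu^*u$. A symmetric calculation using \eqref{key} gives
\[ \mu^*\omega + \ddbar f = \frac{\Psi^*\omega_{FS,p}}{m} + [E] + \ddbar(\nu^*u) \geq [E] \geq 0, \]
so $f$ is $\mu^*\omega$-psh on $\ti X$. Moreover $f$ is constant on each connected fiber of $\mu$ (trivially so on singleton fibers, while on the exceptional locus $\mu^*\psi\equiv-\infty$ forces $f\equiv-\infty$), so $f$ descends via the standard correspondence for modifications to an $\omega$-psh function $\eta$ on $X$ with $\mu^*\eta = f$. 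The difference $\eta - \psi$ is bounded on $X\setminus V$ and extends across $V$ by the same quasi-psh extension principle, placing $\eta\in\mathcal{C}$. Uniqueness of $u$ comes from surjectivity of $\nu$, uniqueness of $\eta$ from bimeromorphy of $\mu$. The delicate step I expect is verifying the extension of $g$ (and dually, the descent of $f$) across $E$ while preserving the quasi-psh property: boundedness is essential there, and the rest is bookkeeping with \eqref{key} and the Stein factorization.
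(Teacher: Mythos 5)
Your proof is correct and follows the same strategy as the paper: lift via $\mu^*$, subtract $\mu^*\psi$ using \eqref{key}, and descend through the connected fibers of $\nu$ (and conversely). The only cosmetic difference is that where you work off $E$ and then extend the bounded quasi-psh function $g$ across $E$, the paper notes directly that $\mu^*(\eta-\psi)\in L^\infty(\ti X)$ and applies the Siu decomposition to peel off $[E]$, which amounts to the same thing.
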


The relation in \eqref{rel} thus allows us to identify the class $\mathcal{C}$ with the class of bounded weakly $\frac{q^*\omega_{FS,p}}{m}$-psh functions on $\ti{Y}$.

Next, we observe that
\begin{proposition}\label{dim}
We have that
$$\dim Y<\dim X.$$
\end{proposition}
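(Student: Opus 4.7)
The plan is to argue by contradiction: suppose $\dim Y = n$, and derive a contradiction by computing the total non-pluripolar Monge-Amp\`ere mass of $\rho$ in two incompatible ways.

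On one hand, if $\dim Y = n$ then on $X\setminus V$ the holomorphic map $\Phi$ is generically a submersion onto the smooth locus of $Y \subset \mathbb{CP}^{p-1}$, so $\omega+\ddbar\psi = \Phi^*\omega_{FS,p}/m$ satisfies $(\omega+\ddbar\psi)^n \not\equiv 0$ on $X\setminus V$. Since $\psi$ has analytic singularities along the pluripolar set $V$, its non-pluripolar Monge-Amp\`ere coincides with the Bedford-Taylor measure on $X\setminus V$ extended by zero, giving
\[\int_X \langle(\omega+\ddbar\psi)^n\rangle = \int_{X\setminus V}(\omega+\ddbar\psi)^n > 0.\]
By Lemma \ref{parr}, $\rho-\psi\in L^\infty(X)$, so $\rho$ and $\psi$ have the same singularity type. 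Witt Nystr\"om's monotonicity of non-pluripolar Monge-Amp\`ere masses (refined by Darvas-Di Nezza-Lu) for functions of the same singularity type then gives
\[\int_X \langle(\omega+\ddbar\rho)^n\rangle = \int_X \langle(\omega+\ddbar\psi)^n\rangle > 0.\]

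On the other hand, I will show $\langle(\omega+\ddbar\rho)^n\rangle \equiv 0$ on $X$, yielding the contradiction. Fix a compact $K \Subset X\setminus V$. The partial $C^0$ estimate \eqref{partiall}, together with the smooth convergence $\psi_i\to\psi$ on $X\setminus V$, implies that $\vp_i-\sup_X\vp_i$ is uniformly bounded on $K$. Combined with $\sup_X\vp_i\to+\infty$, the Monge-Amp\`ere equation \eqref{ma} gives $\omega_i^n\to 0$ uniformly on $K$, consistent with \eqref{tozero}. Since the sequence $\vp_i-\sup_X\vp_i$ is uniformly bounded on $K$ and converges to $\rho$ in $L^1$, standard pluripotential theory yields convergence in capacity on compact subsets, and Bedford-Taylor's continuity of the Monge-Amp\`ere operator under capacity convergence gives $\omega_i^n \to (\omega+\ddbar\rho)^n$ weakly on $K$. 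Hence $(\omega+\ddbar\rho)^n = 0$ on $X\setminus V$ in the Bedford-Taylor sense, and since $V$ is pluripolar the non-pluripolar Monge-Amp\`ere $\langle(\omega+\ddbar\rho)^n\rangle$ vanishes identically on $X$.

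This contradicts the first computation, so $\dim Y < \dim X$. The main technical obstacle is justifying the Monge-Amp\`ere convergence $\omega_i^n \to (\omega+\ddbar\rho)^n$ on compact subsets of $X\setminus V$, which requires an appropriate stability argument from pluripotential theory to pass from the $L^1$-convergence of the uniformly bounded potentials $\vp_i-\sup_X\vp_i$ to weak convergence of their Monge-Amp\`ere measures.
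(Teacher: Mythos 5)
Your overall strategy---computing the non-pluripolar Monge--Amp\`ere mass of $\rho$ in two ways to force a contradiction---is sensible, and the first half is essentially correct: if $\dim Y=n$ then $\Phi$ is generically finite, so $(\omega+\ddbar\psi)^n=(\Phi^*\omega_{FS,p}/m)^n>0$ on a nonempty open set of $X\setminus V$, hence $\int_X\langle(\omega+\ddbar\psi)^n\rangle>0$, and since $\rho-\psi\in L^\infty(X)$ the monotonicity of non-pluripolar masses under equivalence of singularity type gives $\int_X\langle(\omega+\ddbar\rho)^n\rangle=\int_X\langle(\omega+\ddbar\psi)^n\rangle>0$.

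The gap is in the second half, and you flagged it yourself: the step asserting that $L^1$-convergence of a sequence of locally uniformly bounded $\omega$-psh functions implies convergence in capacity (and hence weak convergence of Monge--Amp\`ere measures) is \emph{false} in general for $n\geq 2$. The complex Monge--Amp\`ere operator is discontinuous with respect to $L^1_{\rm loc}$ convergence even on uniformly bounded families; Bedford--Taylor and Xing continuity requires monotone or capacity convergence, which $L^1$ convergence does not provide. There is no ``appropriate stability argument'' that rescues this step without additional structural input. The way the paper circumvents this entirely is by invoking the second-order consequence of the partial $C^0$ estimate, $\omega_{t_i}\geq C^{-1}\,\iota^*\sigma(t_i)^*\tau(t_i)^*\omega_{FS}/m$ (equation \eqref{sc}). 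After tracking the linear projection $P$ and the automorphisms $\tau(t_i),\sigma(t_i)$, this yields $\omega_i\geq C^{-1}\Phi^*\omega_{FS,p}/m$ on a fixed open $U\Subset X\setminus V$ for $i$ large; if $\dim Y=n$ then $\Phi^*\omega_{FS,p}$ is a K\"ahler form on $U$ and so $\int_U\omega_i^n\geq C^{-1}>0$, which directly contradicts $\omega_i^n\to 0$ uniformly on $U$ (equation \eqref{tozero}) with no passage to the limit of Monge--Amp\`ere measures required. Note also that in the paper $(\omega+\ddbar\rho)^n=0$ on $X\setminus V$ is Theorem \ref{zeroma}, which is proved \emph{using} Proposition \ref{dim}; your argument would need to establish the vanishing independently, and the MA convergence step is precisely where it breaks.
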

This is a consequence of our assumption that $X$ does not admit a K\"ahler-Einstein metric.

Lastly, every function $\eta\in\mathcal{C}$ belongs to $L^\infty_{\rm loc}(X\backslash V)$, and so its Monge-Amp\`ere operator $(\omega+\ddbar\eta)^n$ is well-defined on $X\backslash V$ thanks to Bedford-Taylor \cite{BT}. Combining the results in Propositions \ref{corr} and \ref{dim} we will obtain:

\begin{theorem}\label{zeroma}
For every $\eta\in\mathcal{C}$ we have that
$$(\omega+\ddbar\eta)^n=0,$$
on $X\backslash V$.
\end{theorem}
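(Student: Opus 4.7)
The plan is to pull everything back via the modification $\mu$ to the smooth manifold $\tilde{X}$, where the relevant currents visibly factor through the Stein factorization map $\nu: \tilde{X} \to \tilde{Y}$, whose target has dimension $k := \dim Y < n$ by Proposition \ref{dim}. Given $\eta \in \mathcal{C}$, Proposition \ref{corr} supplies a bounded weakly $\tfrac{q^*\omega_{FS,p}}{m}$-psh function $u$ on $\tilde{Y}$ with $\mu^*\eta = \mu^*\psi + \nu^*u$. Combining this with the key relation \eqref{key} and using that $E$ is supported inside $\mu^{-1}(V)$, on $\tilde{X}\setminus \mu^{-1}(V)$ I obtain
$$\mu^*(\omega + \ddbar \eta) = \beta + \ddbar \nu^* u, \qquad \beta := \tfrac{\Psi^*\omega_{FS,p}}{m} = \nu^*\!\left(\tfrac{q^*\omega_{FS,p}}{m}\right).$$
Since $\mu$ is a sequence of blowups with centers inside $V$, it restricts to a biholomorphism $\tilde{X}\setminus\mu^{-1}(V)\to X\setminus V$, so it suffices to prove $(\beta + \ddbar \nu^* u)^n = 0$ on $\tilde{X}\setminus\mu^{-1}(V)$.

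The point is that this Monge-Amp\`ere power is the Bedford-Taylor measure of a locally bounded psh function: locally $\beta = \ddbar \lambda$ for a smooth $\lambda$, so $\beta + \ddbar \nu^* u = \ddbar(\lambda + \nu^* u)$ with $\lambda + \nu^* u$ bounded psh. Such measures do not charge pluripolar sets, so I may discard the analytic subset $\nu^{-1}(\mathrm{sing}(\tilde{Y}))$ and reduce to proving the vanishing on the open dense subset $\tilde{X}\setminus(\mu^{-1}(V)\cup\nu^{-1}(\mathrm{sing}(\tilde{Y})))$.

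The latter is a local matter and I would handle it by approximation. At any point $x$ in this open set, pick a small coordinate ball $B\ni x$ in $\tilde{X}$ and a coordinate chart $U\subset \mathrm{reg}(\tilde{Y})$ of complex dimension $k$ around $\nu(x)$, with $B$ small enough that $\nu(B)\subset U$. Standard convolution yields a decreasing sequence $u_j\searrow u$ of smooth psh functions on $U$. Then $\nu^*u_j\searrow \nu^*u$ is a decreasing sequence of bounded psh functions on $B$, and since the identities $\nu^*\ddbar=\ddbar\nu^*$ and $\nu^*(\alpha\wedge\gamma)=(\nu^*\alpha)\wedge(\nu^*\gamma)$ are valid for smooth forms,
$$(\beta + \ddbar \nu^* u_j)^n = \nu^*\!\left(\left(\tfrac{q^*\omega_{FS,p}}{m} + \ddbar u_j\right)^n\right) = 0,$$
because the inner $(n,n)$-form lives on the $k$-dimensional space $U$ with $k<n$. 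Bedford-Taylor continuity along decreasing sequences of uniformly bounded psh functions then gives $(\beta + \ddbar \nu^* u)^n=0$ on $B$; patching over the open set and invoking the pluripolar reduction finishes the proof on $\tilde{X}\setminus\mu^{-1}(V)$, and transferring through $\mu$ gives the desired vanishing on $X\setminus V$.

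The main obstacle I anticipate is handling the interplay with the possible singularities of $\tilde{Y}$: smooth approximation of $u$ is cleanest over the regular locus, and one needs a principled way to throw away $\nu^{-1}(\mathrm{sing}(\tilde{Y}))$ without losing mass. The pluripolar non-charging property of Bedford-Taylor Monge-Amp\`ere measures of bounded psh functions is precisely the tool that carries this burden, and it is what makes the local approximation argument sufficient for the global statement.
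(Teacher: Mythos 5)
Your proof is correct and follows essentially the same route as the paper's: after Proposition \ref{corr} and the key relation \eqref{key}, one factors $\mu^*(\omega+\ddbar\eta)$ through $\nu$ and concludes that the $n$-fold Bedford--Taylor product vanishes because $\dim\ti{Y}<n$. The paper compresses this final step into the single line $\int \nu^*\bigl(\frac{q^*\omega_{FS,p}}{m}+\ddbar u\bigr)^n=0$; your local approximation by smooth $\frac{q^*\omega_{FS,p}}{m}$-psh potentials together with the non-charging of pluripolar sets (to discard $\nu^{-1}(\mathrm{sing}\,\ti{Y})$) is exactly the rigorous justification of that line.
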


In particular, this holds for the function $\rho$, thanks to Lemma \ref{parr}, and 
Theorem \ref{main} thus follows from these.

\begin{proof}[Proof of Proposition \ref{corr}]
If $\eta$ is an $\omega$-psh function on $X$ with $\eta-\psi\in L^\infty(X)$, i.e. $\eta$ is an element of $\mathcal{C}$, then using \eqref{key} we can write
$$\mu^*(\omega+\ddbar\eta)=\frac{\Psi^*\omega_{FS,p}}{m}+\ddbar\mu^*(\eta-\psi)+[E],$$
where $E$ is as in \eqref{key} and $\mu^*(\eta-\psi)\in L^\infty(\ti{X})$. Applying the Siu decomposition, we see that
$$\frac{\Psi^*\omega_{FS,p}}{m}+\ddbar\mu^*(\eta-\psi)\geq 0,$$
weakly, and so $$\mu^*(\eta-\psi)=\nu^*u_\eta,$$
for a bounded weakly $\frac{q^*\omega_{FS,p}}{m}$-psh functions $u_\eta$ on $\ti{Y}$, which is uniquely determined by $\eta$ (and $\psi$, which we view as fixed).

Conversely, given a bounded weakly $\frac{q^*\omega_{FS,p}}{m}$-psh function $u$ on $\ti{Y}$, we have that $\nu^*u$ is $\frac{\Psi^*\omega_{FS,p}}{m}$-psh and bounded on $\ti{X}$ and so
$$0\leq \frac{\Psi^*\omega_{FS,p}}{m}+[E]+\ddbar \nu^*u=\mu^*\omega+\ddbar(\mu^*\psi+\nu^*u),$$
and so $\mu^*\psi+\nu^*u$ descends to an $\omega$-psh function $\eta_u$ on $X$ with $\eta_u-\psi\in L^\infty(X)$, i.e. $\eta_u\in\mathcal{C}$.

These two constructions are inverses to each other, and so we obtain the desired bijective correspondence between functions in $\mathcal{C}$ and bounded weakly $\frac{q^*\omega_{FS,p}}{m}$-psh functions on $\ti{Y}$.
\end{proof}

\begin{proof}[Proof of Proposition \ref{dim}]
On $X$ we have the estimate
\begin{equation}\label{sc}
\omega_t \geq C^{-1} \frac{\iota^*\sigma(t)^*\tau(t)^*\omega_{FS}}{m},
\end{equation}
which is a direct consequence of the partial $C^0$ estimate (see e.g. \cite[Lemma 4.2]{DS}). We can also give a direct proof by calculating
\[
\Delta_{\omega_t}\left(\log\tr{\omega_t}{\left(\frac{\iota^*\sigma(t)^*\tau(t)^*\omega_{FS}}{m}\right)}-A(\vp_t-\sup_X\vp_t-\psi_t)\right)\geq \tr{\omega_t}{\left(\frac{\iota^*\sigma(t)^*\tau(t)^*\omega_{FS}}{m}\right)}-C,
\]
if $A$ is sufficiently large, and applying the maximum principle together with the partial $C^0$ estimate \eqref{partiall} (for this calculation we used that the bisectional curvature of the metrics $\frac{\iota^*\sigma(t)^*\tau(t)^*\omega_{FS}}{m}$ have a uniform upper bound independent of $t$).

If we had $\dim Y=\dim X$ then the rational map $\Phi$ would be generically finite, so there would be a nonempty open subset $U\Subset X\backslash V$ such that $\Phi|_U$ is a biholomorphism with its image.
Recall that $\Phi$ is the rational map defined by the sections $\{\lambda_jS_j\}_{1\leq j\leq p}$, while $\iota:X\hookrightarrow\mathbb{CP}^{N_m-1}$ is the embedding defined by
the sections $\{S_j\}_{1\leq j\leq N_m}$, and so $\Phi=\ti{\tau}\circ P\circ \iota$ where $P:\mathbb{CP}^{N_m-1}\dashrightarrow \mathbb{CP}^{p-1}$ is the linear projection given by $[z_1:\cdots:z_{N_m}]\mapsto[z_1:\cdots:z_p]$ and $\ti{\tau}:\mathbb{CP}^{p-1}\to\mathbb{CP}^{p-1}$ is the automorphism given by
$$[z_1:\cdots:z_p]\mapsto[\lambda_1z_1:\cdots:\lambda_pz_p].$$
In particular, on the embedded open $n$-fold $\iota(U)$, we have that $P|_{\iota(U)}$ is also a biholomorphism with its image. The automorphisms $\tau(t_i)$ descend to automorphisms $\ti{\tau}(t_i)$ on $\mathbb{CP}^{p-1}$, and now as $i\to\infty$ these converge smoothly to the automorphism $\ti{\tau}$. Thus $P\circ\tau(t_i)\circ\sigma(t_i)\circ\iota=\ti{\tau}(t_i)\circ P\circ\sigma(t_i)\circ \iota$, which converge smoothly as maps to $\ti{\tau}\circ P\circ\iota=\Phi$ on $U$ as $i\to\infty$.

Since $\Phi$ is an isomorphism on $U$, smooth convergence gives us that $P\circ\tau(t_i) \circ\sigma(t_i)\circ \iota$ is a local isomorphism. Thus, after possibly shrinking $U$,
$$P:(\tau(t_i)\circ\sigma(t_i)\circ\iota)(U)\to (P\circ \tau(t_i)\circ\sigma(t_i)\circ\iota)(U)=(\ti{\tau}(t_i)\circ P\circ\sigma(t_i)\circ \iota)(U)$$
is an isomorphism, and for $i$ large the open sets $(\ti{\tau}(t_i)\circ P\circ\sigma(t_i)\circ \iota)(U)\subset\mathbb{CP}^{p-1}$ converge to the open set $(\ti{\tau}\circ P\circ \iota)(U)$ in the Hausdorff sense. Up to shrinking $U$, there is an open subset $V\subset\mathbb{CP}^{p-1}$ that contains $(\ti{\tau}(t_i)\circ P\circ\sigma(t_i)\circ \iota)(U)$ for all $i$ large, and still $P^{-1}$ is well-defined on $V$ (and $P:P^{-1}(V)\to V$ is a biholomorphism), so that $P^{-1}(V)$ contains $(\tau(t_i)\circ\sigma(t_i)\circ\iota)(U)$ for all $i$ large, and on $P^{-1}(V)$ we have
\begin{equation}\label{sc1}
P^*\omega_{FS,p}\leq C\omega_{FS},
\end{equation}
On $U$ we also have that $\frac{\iota^*\sigma(t_i)^*\tau(t_i)^*P^*\omega_{FS,p}}{m}$ converges smoothly to $\frac{\Phi^*\omega_{FS,p}}{m}$, which is a K\"ahler metric on $U$.
Thanks to \eqref{sc} and \eqref{sc1}, on $U$ we have
$$\omega_i \geq C^{-1} \frac{\iota^*\sigma(t_i)^*\tau(t_i)^*\omega_{FS}}{m} \geq C^{-1} \frac{\iota^*\sigma(t_i)^*\tau(t_i)^*P^*\omega_{FS,p}}{m}\geq C^{-1}\frac{\Phi^*\omega_{FS,p}}{m},$$
for all $i$ large, which implies that $\int_U \omega_i^n \geq C^{-1}$, which is absurd thanks to \eqref{tozero}.
\end{proof}
\begin{remark}\label{uniq}
In particular we see that if $\dim Y=0$ (i.e. $Y$ is a point) then we have $\mathcal{C}=\{\psi+s\}_{s\in \mathbb{R}}$. On the other hand as long as $\dim Y>0$ the class $\mathcal{C}$ is always rather large.
\end{remark}

\begin{proof}[Proof of Theorem \ref{zeroma}]
Thanks to Proposition \ref{corr}, every $\eta\in\mathcal{C}$ satisfies $\mu^*\eta=\mu^*\psi+\nu^*u$ for some bounded weakly $\frac{q^*\omega_{FS,p}}{m}$-psh function $u$ on $\ti{Y}$. Then using \eqref{key} we have
\[\begin{split}
\mu^*(\omega+\ddbar\eta)&=\frac{\Psi^*\omega_{FS,p}}{m}+\ddbar\nu^*u+[E]\\
&=\nu^*\left(\frac{q^*\omega_{FS,p}}{m}+\ddbar u\right)+[E],
\end{split}\]
and so if $K$ is any compact subset of $X\backslash V$, since $\mu$ is an isomorphism on $\mu^{-1}(K)$, we get
\[\begin{split}
\int_K(\omega+\ddbar\eta)^n&=\int_{\mu^{-1}(K)}\mu^*(\omega+\ddbar\eta)^n\\
&=\int_{\mu^{-1}(K)}\nu^*\left(\frac{q^*\omega_{FS,p}}{m}+\ddbar u\right)^n=0,
\end{split}\]
since $\dim \ti{Y}=\dim Y<\dim X$ by Proposition \ref{dim}.
\end{proof}

\section{The pluricomplex Green's function}\label{sectg}
We can also consider the pluricomplex Green's function with singularity type determined by $\psi$, namely
\begin{equation}\label{en2}
G=\sup\{u\ |\ u\in PSH(X,\omega), u\leq 0, u\leq \psi+O(1)\}^*,
\end{equation}
which is the compact manifold analog of the construction in \cite{RS}, and has been studied in detail in \cite{CG,PS,RWN2} and references therein. In particular, since $\psi$ has analytic singularities, it follows from \cite{RS,RWN2} that $G\in\mathcal{C}$.

Thanks to Proposition \ref{corr} we can write
\begin{equation}\label{rel2}
\mu^*G=\mu^*\psi+\nu^*F,
\end{equation}
for a bounded weakly $\frac{q^*\omega_{FS,p}}{m}$-psh function $F$ on $\ti{Y}$. The function $F$ is itself given by a suitable envelope.
\begin{proposition}\label{prop51}
The pluricomplex Green's function $G$ satisfies \eqref{rel2} where $F$ is the envelope on $\ti{Y}$ given by
\begin{equation}\label{en1}
F=\sup\{w\ |\ w\in PSH(\ti{Y},q^*\omega_{FS,p}/m), w\leq -\nu_*\mu^*\psi\}^*,
\end{equation}
and where we are writing
$$\nu_*(f)(y)=\sup_{x\in\nu^{-1}(y)}f(x),$$
for any function $f$ on $\ti{X}, y\in \ti{Y}$.
\end{proposition}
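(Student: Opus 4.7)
The plan is to use Proposition \ref{corr} to translate the envelope \eqref{en2} defining $G$ into an envelope on $\ti Y$, and then check that this translated envelope coincides with the one defining $F$ in \eqref{en1}. Since $G\in\mathcal{C}$, Proposition \ref{corr} already gives us a unique bounded weakly $\tfrac{q^*\omega_{FS,p}}{m}$-psh function $F_G$ on $\ti Y$ with $\mu^*G=\mu^*\psi+\nu^*F_G$. The task is to show $F_G=F$, which I would do by two opposite inequalities.

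For $F_G\leq F$, I would verify that $F_G$ is itself a competitor in the envelope \eqref{en1}. Weak $\tfrac{q^*\omega_{FS,p}}{m}$-plurisubharmonicity is built into Proposition \ref{corr}. Since $G\leq 0$ on $X$, pulling back gives $\mu^*\psi+\nu^*F_G\leq 0$ on $\ti X$, i.e. $\nu^*F_G\leq -\mu^*\psi$ pointwise, and hence, taking infimum over fibers of $\nu$,
\[
F_G(y)\leq \inf_{x\in\nu^{-1}(y)}(-\mu^*\psi(x))=-\nu_*\mu^*\psi(y),
\]
so $F_G$ is admissible and is dominated by the supremum (and its usc regularization) $F$.

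For the reverse $F\leq F_G$, I take any admissible competitor $w$ in \eqref{en1} and construct a competitor for $G$. Set $v:=\mu^*\psi+\nu^*w$ on $\ti X$. Using the key identity \eqref{key},
\[
\mu^*\omega+\ddbar v=\frac{\Psi^*\omega_{FS,p}}{m}+[E]+\ddbar\nu^*w=\nu^*\!\left(\frac{q^*\omega_{FS,p}}{m}+\ddbar w\right)+[E]\geq 0,
\]
so $v$ is $\mu^*\omega$-psh on $\ti X$; by the converse direction of Proposition \ref{corr}, $v$ descends to a function $u\in\mathcal{C}$ with $\mu^*u=v$. The inequality $w\leq -\nu_*\mu^*\psi$ pulls back to $\nu^*w\leq -\mu^*\psi$ on $\ti X$, hence $v\leq 0$, so $u\leq 0$. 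Since $u\in\mathcal{C}$ we automatically have $u\leq\psi+O(1)$. Thus $u$ is admissible in \eqref{en2} and $u\leq G$, which pulled back gives $\nu^*w\leq\nu^*F_G$ and therefore $w\leq F_G$. Taking the supremum over all such $w$ and the usc regularization yields $F\leq F_G$.

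The only subtle points are that $\ti Y$ may be singular, so one works with weakly psh functions (as already discussed after Proposition \ref{corr}), and that the usc-regularizations in \eqref{en1} and \eqref{en2} must be handled correctly: the inequality $w\leq F_G$ for every admissible $w$ passes to the sup, and since $F_G$ is already usc the envelope $F$ satisfies $F\leq F_G$ as well. The main potential obstacle I anticipate is the descent step---verifying that a bounded $\mu^*\omega$-psh function of the form $\mu^*\psi+\nu^*w$ truly descends through $\mu$ to an $\omega$-psh function on $X$---but this is exactly the content of the converse assertion in Proposition \ref{corr}, so it may be invoked directly.
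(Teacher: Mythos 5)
Your proof is correct and is a genuinely cleaner route than the paper's. The paper proves the result by first rewriting $G$ as an envelope $\tilde G$ on $\tilde X$, then applying Berman's change-of-obstacle trick (subtracting the divisor potential $\log|s|^2_h$) to express $\tilde G$ as $\log|s|^2_h$ plus an envelope relative to the smooth form $\mu^*\omega-R_h$, and only then identifying the latter with $\nu^*F$. You instead invoke Proposition~\ref{corr} as a black box: this already encapsulates both the Siu decomposition argument (extracting $[E]$ from the current) and the descent through the connected fibers of $\nu$, so the Berman trick becomes unnecessary, and the double inclusion is carried out directly between $F_G$ and $F$ on $\tilde Y$. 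Both proofs are ultimately double-inclusion arguments between envelopes on different spaces, but yours leverages the machinery that the paper had already built, while the paper's is more hands-on and self-contained. Your approach buys conceptual economy; the paper's buys a formula ($\tilde G=\log|s|^2_h+\hat G$) that is interesting in its own right.

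One small point to tighten: when you take a competitor $w$ in \eqref{en1} and invoke ``the converse direction of Proposition~\ref{corr},'' note that the proposition as stated requires $w$ to be \emph{bounded}, whereas an admissible $w$ in \eqref{en1} is a priori only bounded above (by $-\nu_*\mu^*\psi$, which is bounded above since $\psi$ is). This is easily remedied either by observing that one can restrict without loss to bounded competitors (replace $w$ by $\max(w,c)$ for $c$ below $\inf(-\nu_*\mu^*\psi)$, which is finite since $\psi\leq 0$), or by noting that only boundedness from above is actually used in the descent step of Proposition~\ref{corr}'s proof and in verifying $u\leq\psi+O(1)$. Either way the gap is cosmetic, not substantive.
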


In other words, $F$ is given by a quasi-psh envelope with obstacle $-\nu_*\mu^*\psi$ on $\ti{Y}$.
\begin{proof}
Write $E=\sum_i\lambda_i E_i$ for $E_i$ prime divisors and $\lambda_i\in\mathbb{R}_{>0}$, and for each $i$ fix a defining section $s_i$ of $\mathcal{O}(E_i)$ and a smooth metric $h_i$ on $\mathcal{O}(E_i)$ with curvature form $R_i$. For brevity, we will write $|s|^2_h=\prod_i |s_i|^{2\lambda_i}_{h_i}$ and $R_h=\sum_i \lambda_i R_i$.
Then the Poincar\'e-Lelong formula gives
$$[E]=\ddbar\log|s|^2_h+R_h,$$
and we obtain that $\mu^*\omega-R_h$ is cohomologous to $\frac{\Psi^*\omega_{FS,p}}{m}$ and
$$\mu^*\omega-R_h=\frac{\Psi^*\omega_{FS,p}}{m}+\ddbar(\log|s|^2_h-\mu^*\psi),$$
and $\mu^*\psi-\log|s|^2_h$ is smooth on all of $\ti{X}$. Note that if we denote by
$$\ti{G}=\sup\{u\ |\ u\in PSH(\ti{X},\mu^*\omega), u\leq 0, u\leq \log|s|^2_h+O(1)\}^*,$$
then we have that $\ti{G}=\mu^*G$ (this is again because every $\mu^*\omega$-psh function on $\ti{X}$ is in fact the pullback of an $\omega$-psh function on $X$).

As in \cite{McC}, we use a trick from \cite[Section 4]{Be} (see also \cite{RS}), to show that
$$\ti{G}=\log|s|^2_h+\sup\{v\ |\ v\in PSH(\ti{X},\mu^*\omega-R_h), v\leq -\log|s|^2_h\}^*.$$
For the reader's convenience, we supply the simple proof.
Denote the right hand side by $\hat{G}$. For one direction, if $v$ is $(\mu^*\omega-R_h)$-psh and satisfies $v\leq -\log|s|^2_h$, then
$u:=v+\log|s|^2_h$ satisfies $u\leq 0$ but also since $v\leq C$ on $\ti{X}$, we see that $u\leq \log|s|^2_h+C$, and also
\[\begin{split}
\mu^*\omega+\ddbar u&=\mu^*\omega+\ddbar\log|s|^2_h+\ddbar v\\
&=\mu^*\omega-R_h+[E]+\ddbar v\\
&\geq \mu^*\omega-R_h+\ddbar v\geq 0,
\end{split}\]
and so $\hat{G}\leq\ti{G}$. Conversely, if $u$ is $\mu^*\omega$-psh and satisfies $u\leq 0$ and $u\leq \log|s|^2_h+C$ for some $C$, then the Siu decomposition of $\mu^*\omega+\ddbar u$ contains $[E]$ and so
$$0\leq \mu^*\omega+\ddbar u-[E]=\mu^*\omega-R_h+\ddbar(u-\log|s|^2_h),$$
and so $v:=u-\log|s|^2_h$ is $(\mu^*\omega-R_h)$-psh and satisfies $v\leq -\log|s|^2_h,$ and it follows that $\ti{G}\leq \hat{G}$, which proves our claim.

But finally note that for all $x\in\ti{X}$ we have
\[\begin{split}
&\log|s|^2_h(x)+\sup\{v(x)\ |\ v\in PSH(\ti{X},\mu^*\omega-R_h), v\leq -\log|s|^2_h\}\\
&=\mu^*\psi(x)+\sup\{v(x)\ |\ v\in PSH(\ti{X},\Psi^*\omega_{FS,p}/m), v\leq -\mu^*\psi\}\\
&=\mu^*\psi(x)+\sup\{w(\nu(x))\ |\ w\in PSH(\ti{Y},q^*\omega_{FS,p}/m), w\leq -\nu_*\mu^*\psi\}
\end{split}\]
and taking the upper-semicontinuous regularization and using the claim above gives
$\mu^*G=\mu^*\psi+\nu^*F,$ which completes the proof.
\end{proof}

Using Proposition \ref{prop51} we can see that $F$ is continuous on a Zariski open subset of $\ti{Y}$, using the following argument. Let $g: Y'\rightarrow \ti{Y}$ be a resolution of the singularities of $\ti{Y}$. Then we have:
\[
g^* F = \sup\{w\ |\ w\in PSH(Y',g^*q^*\omega_{FS,p}/m), w\leq -g^*\nu_*\mu^*\psi\}^*.
\]
\reducespace{Note that $g^*q^*\omega_{FS,p}/m$ is semi-positive and big, and that $-g^*\nu_*\mu^*\psi$ is continuous off of $g^{-1}(\nu(\mu^{-1}(\psi^{-1}(-\infty))))$,} where it is unbounded. Using the trick in \cite{McC}, we can replace the obstacle $-g^*\nu_*\mu^*\psi$ with a globally continuous obstacle $h$ without changing $g^*F$. Now, approximate $h$ uniformly by smooth functions $h_j$. It is easy to see that the envelopes:
\[
F_j := \sup\{w\ |\ w\in PSH(Y',g^*q^*\omega_{FS,p}/m), w\leq h_j\}^*.
\]
converge uniformly to $g^*F$. But then by \cite{Be}, the $F_j$ are continuous away from the non-K\"ahler locus of $g^*q^*\omega_{FS,p}/m$ (a proper Zariski closed subset, see e.g. \cite{BEGZ}), so we are done.

\begin{remark}
One is naturally led to wonder about what the optimal regularity of $G$ is. The sharp $C^{1,1}$ regularity (on a Zariski open subset) of envelopes of the form \eqref{en1} has been recently obtained in \cite{CZ,To3} in K\"ahler classes and in \cite{CTW} in nef and big classes (see also \cite{Be2,Be,BD}) when the obstacle is smooth (or at least $C^{1,1}$), but in our case the regularity of $-\nu_*\mu^*\psi$ does not seem to be very good, especially near the points where $\nu$ is not a submersion.

On the other hand, the first-named author \cite{McC} has very recently obtained $C^{1,1}$ regularity (on a Zariski open subset) of envelopes with prescribed analytic singularities, which include those of the form \eqref{en2}, generalizing results in \cite{RWN2} in the case of line bundles. In our situation, the results of \cite{McC,RWN2} do not apply since in \eqref{en2} the functions $u$ and $\psi$ are both $\omega$-psh (while for these results one would need them to be quasi-psh with respect to two different $(1,1)$-forms such that the cohomology class of their difference is big). Moreover, the main result of \cite{McC} also allows for $u$ and $\psi$ being both $\omega$-psh, but then needs the condition that the total mass of the non-pluripolar Monge-Amp\`ere operator of $\psi$ be strictly positive. This is obviously not the case in our situation however, by Theorem \ref{zeroma}.
\end{remark}
\begin{remark}
One possibly interesting approach to studying higher regularity of functions $v\in \mathcal C$ which are already continuous on $X\setminus V$ is the following. Suppose $\sup_X v = 0$. Fix an $M > 0$ and let $\Omega$ be the open set $\Omega := \{v < -M\}$. Then one can easily show using the comparison principle and Theorem \ref{zeroma} that we  have:
\[
\max\{ v, -M\} = V_\Omega - M,
\]
where here $V_\Omega$ is the global (Siciak) extremal function for $\Omega$. In particular, one sees that $\Omega$ is regular. There is then a well-developed theory about H\"older continuous regularity for such functions (the so called HCP property), see e.g. \cite{Sic}. It may be possible to use this theory to study $G$, if one can first show that it is continuous in at least a neighborhood of $V$. Another possibility may be to study regularity of the boundary of $\Omega$ -- see the very end of \cite{McC}.
\end{remark}
\begin{remark}
On can also naturally ask whether the function $\rho$ (and therefore also its singularity type $\psi$) in Theorem \ref{main} is actually independent of the choice of subsequence $t_i$, and also how regular $\rho$ is on $X\backslash V$. Our guess is that $\rho$ is indeed uniquely determined, and is smooth on $X\backslash V$. These properties would both follow if one could show that the map $\Phi:X\dashrightarrow Y$ is independent of the chosen subsequence, and that the corresponding function $u$ on $\ti{Y}$ given by Lemma \ref{parr} and Proposition \ref{corr} which satisfies
$$\mu^*\rho=\mu^*\psi+\nu^*u,$$
actually solves a suitable complex Monge-Amp\`ere equation on $\ti{Y}$. In a related setting of Calabi-Yau manifolds fibered over lower-dimensional spaces, such a limiting equation after collapsing the fibers was obtained by the second-named author in \cite[Theorem 4.1]{To0}.
\end{remark}
\begin{remark}
Lastly, we can also ask whether the limit $\rho$ (if it is unique) is necessarily equal to the pluricomplex Green's function $G$ up to addition of a constant. By remark \ref{uniq} this is the case if the rational map $\Phi$ is constant, so that $Y$ is a point. In general though this seems rather likely false.
\end{remark}

\bibliographymark{References}

\providecommand{\bysame}{\leavevmode\hbox to3em{\hrulefill}\thinspace}
\providecommand{\arXiv}[1]{\href{https://arxiv.org/abs/#1}{arXiv:#1}}
\providecommand{\MR}{\relax\ifhmode\unskip\space\fi MR }
\providecommand{\MRhref}[2]{%
  \href{http://www.ams.org/mathscinet-getitem?mr=#1}{#2}
}
\providecommand{\href}[2]{#2}

\end{document}